\documentclass[12pt,a4paper]{amsart}
\usepackage{amsfonts}
\usepackage{mathrsfs}
\usepackage{amsthm,amsxtra}
\usepackage{amssymb}
\usepackage{amsmath}
\usepackage{amscd}
\usepackage{hyperref}
\hypersetup{
colorlinks=true,
linkcolor=blue,
citecolor=gray,
}
\usepackage[latin2]{inputenc}
\usepackage{t1enc}
\usepackage[mathscr]{eucal}
\usepackage{indentfirst}
\usepackage{graphicx}
\graphicspath{ {images/} }
\usepackage{subfig}
\usepackage{enumerate}
\usepackage{graphics}
\usepackage{pict2e}
\usepackage{epic}
\numberwithin{equation}{section}
\usepackage[margin=2.9cm]{geometry}
\usepackage{epstopdf} 
\usepackage{longtable}
\usepackage{enumerate}
\usepackage[inline]{enumitem}
\usepackage{tikz-cd}
\usepackage{tikz}
\usepackage{color,soul}

\usepackage{todonotes}

\DeclareMathOperator{\Mod}{Mod}

\DeclareMathOperator{\Homeo}{Homeo}

\newcommand{\SSS}{\mathbb{S}}

\theoremstyle{plain}
\newtheorem{theorem}{Theorem}
\newtheorem{proposition}[theorem]{Proposition}
\newtheorem{lemma}[theorem]{Lemma}

\theoremstyle{definition}

\newtheorem{conjecture}[theorem]{Conjecture}

\theoremstyle{remark}

\newtheorem*{remark*}{Remark}

\begin{document}

\title[A note on the Burnside problem for $\Homeo(M)$]{A note on the Burnside problem for homeomorphism groups of manifolds}
	
\author{Donggyun Seo}
\address{Institute of Mathematical Sciences \\ Chungnam National University \\ Daejeon, South Korea}
\email{seodonggyun@cnu.ac.kr \\ seodonggyun7@gmail.com}
\urladdr{https://www.seodonggyun.com}

\begin{abstract}
    This note studies the Burnside problem for homeomorphism groups of compact connected manifolds.
    For surfaces, we prove that the identity component of the homeomorphism group is torsion-free precisely when the surface is not the sphere, torus, projective plane, or Klein bottle.
    An extension argument based on the Tits alternative for mapping class groups then implies that every finitely generated periodic subgroup of the full homeomorphism group is finite for all surfaces outside this exceptional list, recovering and extending a theorem of Guelman and Liousse to non-orientable surfaces.
    For the circle, we prove that every finitely generated periodic subgroup of its homeomorphism group is finite and cyclic.
    We close with remarks on manifolds with boundary and open questions on the Burnside problem for hyperbolic three-manifolds and doubled handlebodies.
\end{abstract}

\maketitle

\section*{Burnside problem for surface homeomorphisms}

Let $\Sigma$ be a real, compact, connected surface (orientable or non-orientable, possibly with boundary).
The main object of study in this note is the group $\Homeo(\Sigma) = \Homeo(\Sigma, \partial\Sigma)$ of homeomorphisms of $\Sigma$ fixing the boundary pointwise.
If $\Sigma$ is orientable, let $\Homeo^+(\Sigma)$ denote the subgroup of orientation-preserving elements; if $\Sigma$ is non-orientable, set $\Homeo^+(\Sigma) = \Homeo(\Sigma)$.
Let $\Homeo_0(\Sigma)$ denote the identity component of $\Homeo^+(\Sigma)$.
We write $\mathbb{S}^2$, $T^2$, $\mathbb{R}P^2$, and $K$ for the sphere, torus, real projective plane, and Klein bottle, respectively.

\begin{proposition} \label{prop:torsion_free}
For a compact connected surface $\Sigma$, the following are equivalent:
\begin{enumerate}
    \item \label{enum:1} $\Sigma \notin \{ \mathbb{S}^2, T^2, \mathbb{R}P^2, K \}$,
    \item \label{enum:2} $\Homeo_0(\Sigma)$ is torsion-free, and
    \item \label{enum:3} $\Homeo_0(\Sigma)$ is contractible.
\end{enumerate}
\end{proposition}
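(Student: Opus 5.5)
The plan is to prove the cycle of implications (3)$\Rightarrow$(2)$\Rightarrow$(1)$\Rightarrow$(3), using the classical results of Hamstrom on the homotopy type of $\Homeo_0(\Sigma)$ together with elementary facts about finite-order homeomorphisms.

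\textbf{(3)$\Rightarrow$(2).} I would argue by contradiction. Suppose $\Homeo_0(\Sigma)$ is contractible but contains an element $f\neq \mathrm{id}$ of finite order $n$. The subgroup $\langle f\rangle\cong\mathbb{Z}/n$ then acts freely on $\Homeo_0(\Sigma)$ by left multiplication, and this action is properly discontinuous. Since $\Homeo_0(\Sigma)$ is a contractible (metrizable) space, the quotient $\langle f\rangle\backslash\Homeo_0(\Sigma)$ is a $K(\mathbb{Z}/n,1)$. More directly, $\Homeo_0(\Sigma)\to\langle f\rangle\backslash\Homeo_0(\Sigma)$ is a principal $\mathbb{Z}/n$-bundle with contractible total space, so it is universal and the base classifies $\mathbb{Z}/n$. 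This by itself is not a contradiction, because infinite-dimensional spaces can be $K(\mathbb{Z}/n,1)$'s, so I need a different route.

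The route I would take instead: a finite-order homeomorphism of a surface is conjugate to an isometry of a constant-curvature metric (Kerékjártó, Brouwer, Eilenberg). If $\Sigma$ has nonempty boundary, $f$ fixes $\partial\Sigma$ pointwise, and a periodic map fixing an arc is the identity (it is an isometry fixing a geodesic segment and preserving orientation, or else a reflection, which is excluded near the boundary by local orientation preservation). If $\partial\Sigma=\emptyset$ and $\chi(\Sigma)<0$, then $f$ is isotopic to the identity. By Nielsen realization, or simply because $f$ is an isometry of a hyperbolic metric that is homotopic to the identity, $f=\mathrm{id}$, since the isometry group injects into $\Mod(\Sigma)$. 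So in fact I would prove (1)$\Rightarrow$(2) directly and obtain (3)$\Rightarrow$(2) by combining it with (3)$\Leftrightarrow$(1).

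\textbf{(2)$\Rightarrow$(1).} I would exhibit explicit torsion in the four exceptional cases:
\begin{itemize}
\item for $\mathbb{S}^2$ and $\mathbb{R}P^2$, rotations in $SO(3)\subset\Homeo_0$;
\item for $T^2$, the translations $\mathbb{R}^2/\mathbb{Z}^2$;
\item for $K$, the circle action coming from translation along the $S^1$-fibre direction that commutes with the deck involution, namely $(x,y)\mapsto(x,y+t)$ with the involution $(x,y)\mapsto(x+\tfrac12,-y)$. Since the involution negates $y$, I would use translation in $x$ instead. The map $(x,y)\mapsto(x+t,y)$ commutes with it and gives a circle in $\Homeo_0(K)$ containing elements of order $n$.
\end{itemize}

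\textbf{(1)$\Rightarrow$(3).} This is Hamstrom's theorem: for compact surfaces other than $\mathbb{S}^2,\mathbb{R}P^2,T^2,K$, including bounded ones with the boundary fixed, $\Homeo_0(\Sigma)$ has trivial homotopy groups. It is an ANR by Dobrowolski–Toruńczyk type results, and hence contractible. The converse (3)$\Rightarrow$(1) follows because in the exceptional cases $\Homeo_0$ deformation retracts onto $SO(3)$, $SO(3)$, $T^2$ and $S^1$ respectively, none of which is contractible.

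\textbf{Main obstacle.} The delicate step is (1)$\Rightarrow$(2) for bounded surfaces, especially the disk, annulus and Möbius band, where no hyperbolic metric applies directly. Here I would double $\Sigma$ along its boundary, which turns $f$ into a periodic map fixing a curve pointwise, and then apply Kerékjártó's theorem that a periodic map equal to the identity on an open set is the identity.
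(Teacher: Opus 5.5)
Your proposal is correct in outline. It agrees with the paper on two of the three pieces and proves the key implication (1)$\Rightarrow$(2) by a genuinely different route.

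\textbf{Where you agree with the paper.} You quote (1)$\Leftrightarrow$(3) from Kneser, Hamstrom and Luke--Mason; the ANR input you want is Luke--Mason's theorem for surfaces, not Dobrowolski--Toru\'nczyk. You prove (2)$\Rightarrow$(1) by explicit torsion. Your Klein bottle example, $(x,y)\mapsto(x+t,y)$ on $T^2/\langle\sigma\rangle$ with $\sigma(x,y)=(x+\tfrac12,-y)$, is correct and gives elements of every order $n$. It is in fact cleaner than the paper's. In the paper's model the circle action $y\mapsto y+s$ has period $2$, so the map induced by $y\mapsto y+\tfrac12$ has order $4$, not $2$; it is still torsion, so nothing breaks there.

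\textbf{Where you differ.} The paper argues (1)$\Rightarrow$(2) algebraically:
\begin{itemize}
\item For a nontrivial periodic $f\in\Homeo_0(\Sigma)$, it takes a finite orbit $P$ with $|P|\ge2$.
\item Then $[f]$ is a nontrivial torsion element of $\ker(\Mod(\Sigma,P)\to\Mod(\Sigma))$.
\item By the generalized Birman sequence, this kernel is $B_n(\Sigma,P)$. This is where Hamstrom's $\pi_1(\Homeo^+(\Sigma))=1$ enters.
\item $B_n(\Sigma,P)$ is torsion-free by Fadell--Neuwirth and van Buskirk, a contradiction.
\end{itemize}
You instead use the structure theory of periodic surface maps. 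In the closed case you combine Ker\'ekj\'art\'o--Brouwer--Eilenberg conjugation to an isometry with injectivity of $\mathrm{Isom}(X)\to\Mod(\Sigma)$ for closed hyperbolic $X$. When $\partial\Sigma\neq\emptyset$ you use a Newman-type argument.

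\textbf{What each route buys.} The paper's route treats closed and bounded surfaces uniformly and needs no linearization of periodic maps. However, it makes (1)$\Rightarrow$(2) depend on the same Hamstrom input as (1)$\Rightarrow$(3). Yours is independent of that input. In the bounded case it shows that every periodic element of $\Homeo(\Sigma)$, not just of $\Homeo_0(\Sigma)$, is trivial. The paper only obtains that later, in its remarks, via the collar-and-Newman argument.

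\textbf{One step needs repair.} Doubling $\Sigma$ and extending $f$ by $f$ on both copies gives a periodic map that fixes only a curve, not an open set. So the Ker\'ekj\'art\'o/Newman open-set theorem does not apply as you state it. There are two easy fixes:
\begin{itemize}
\item Extend by the identity on the second copy instead. The map $f\cup\mathrm{id}$ is a homeomorphism of the double because $f|_{\partial\Sigma}=\mathrm{id}$. It is periodic and is the identity on an open set, so $f=\mathrm{id}$.
\item Keep $f\cup f$ and conjugate it to an isometry $g$. At a point $p$ of the fixed curve, $dg_p$ fixes a line. It is not a reflection, since $g$ preserves the two sides of the curve. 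Hence $dg_p=\mathrm{id}$, and so $g=\mathrm{id}$. This is what your parenthetical about excluding reflections was pointing at.
\end{itemize}
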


The equivalence of conditions \eqref{enum:1} and \eqref{enum:3} was established by Kneser \cite{MR1544816}, Hamstrom \cite{MR170334, MR184234, MR202140}, and Luke--Mason \cite{MR301693}.
Although the equivalence of \eqref{enum:1} and \eqref{enum:2} is likely well known, the author could not discover a proof in the literature.
This gap motivated the present note.
The implication \eqref{enum:2}$\Rightarrow$\eqref{enum:1} can be derived by exhibiting a nontrivial torsion element in $\Homeo_0(\Sigma)$ for each $\Sigma \in \{\mathbb{S}^2, \mathbb{R}P^2, T^2, K\}$.

\begin{enumerate}
    \item \textit{Sphere.} Identify $\mathbb{S}^2 = \{v \in \mathbb{R}^3 \mid \|v\| = 1\}$.
    The continuous, injective homomorphism $\mathrm{SO}(3) \to \Homeo_0(\mathbb{S}^2)$ given
    by matrix multiplication is well defined since $\mathrm{SO}(3)$ is connected. For
    $\theta = q\pi$ with $q \in \mathbb{Q} \setminus 2\mathbb{Z}$, the rotation
    \[
        v \mapsto \begin{pmatrix} 1 & 0 & 0 \\ 0 & \cos\theta & -\sin\theta \\
        0 & \sin\theta & \cos\theta \end{pmatrix} v
    \]
    is a nontrivial element of finite order in $\Homeo_0(\mathbb{S}^2)$; writing $q = p/n$
    in lowest terms, its order is $2n$ if $p$ is odd and $n$ if $p$ is even.

    \item \textit{Real projective plane.} Identify $\mathbb{R}P^2 = \mathbb{S}^2/(v \sim -v)$.
    Since every $A \in \mathrm{SO}(3)$ commutes with the antipodal map, it descends to a
    homeomorphism $\bar{A}([v]) = [Av]$ of $\mathbb{R}P^2$. This defines a continuous,
    injective homomorphism $\mathrm{SO}(3) \to \Homeo_0(\mathbb{R}P^2)$, where injectivity
    follows from $-I \notin \mathrm{SO}(3)$. The same rotation as in the spherical case,
    \[
        \bar{A}_\theta([v]) = \left[\begin{pmatrix} 1 & 0 & 0 \\ 0 & \cos\theta & -\sin\theta \\
        0 & \sin\theta & \cos\theta \end{pmatrix} v\right]
    \]
    with $\theta = q\pi$ and $q \in \mathbb{Q} \setminus 2\mathbb{Z}$, therefore yields a
    nontrivial torsion element of $\Homeo_0(\mathbb{R}P^2)$.

    \item \textit{Torus.} Identify $T^2 = \mathbb{R}^2/\mathbb{Z}^2$. Since $\mathbb{R}^2$
    is connected, the translation action $v \cdot [x] = [x+v]$ maps into $\Homeo_0(T^2)$.
    For any $v = (p/q,\, r/s) \in \mathbb{Q}^2 \setminus \mathbb{Z}^2$, the translation
    $\tau_v \colon [x] \mapsto [x+v]$ is a nontrivial homeomorphism of finite order
    $\mathrm{lcm}(q,s)$, hence a nontrivial torsion element of $\Homeo_0(T^2)$.

    \item \textit{Klein bottle.} Identify $K = \mathbb{R}^2/\Gamma$, where $\Gamma$ is
    generated by
    \[
        \gamma_1 \colon (x,y) \mapsto (x+1,\, y), \qquad
        \gamma_2 \colon (x,y) \mapsto (-x,\, y+1).
    \]
    The map $\varphi \colon K \to K$ induced by $(x,y) \mapsto (x,\, y + \frac{1}{2})$
    is well-defined, satisfies $\varphi^2 = id$, and is isotopic to the identity
    via the homotopy $\varphi_t \colon (x,y) \mapsto (x,\, y + \tfrac{t}{2})$, $t \in [0,1]$.
    Hence $\varphi$ is a nontrivial torsion element of $\Homeo_0(K)$.
\end{enumerate}
In each case $\Homeo_0(\Sigma)$ contains a nontrivial torsion element.

To prove the implication \eqref{enum:1}$\Rightarrow$\eqref{enum:2} in Proposition \ref{prop:torsion_free}, we recall several basic properties of surface braid groups and mapping class groups.
Most of the material between Proposition \ref{prop:torsion_free} and its proof is adapted from the survey paper of Guaschi--Juan-Pineda \cite{MR3382024}.

Let $D_n(\Sigma)$ denote the space of unordered $n$-tuples of distinct points in $\Sigma$.
More precisely, $D_n(\Sigma)$ is the quotient of
\[
\tilde{D}_n(\Sigma) := \{ (p_1, \dots, p_n) \in \Sigma^n \mid p_i \neq p_j \text{ for } i \neq j \}
\]
under the map $(p_1, \dots, p_n) \mapsto \{p_1, \dots, p_n\}$.
We equip $D_n(\Sigma)$ with the quotient topology induced from $\tilde{D}_n(\Sigma)$.

Fix a positive integer $n$ and a finite subset $P \subset \Sigma$ with $|P| = n$.
The \emph{$n$-strand braid group} of $\Sigma$ based at $P$, denoted $B_n(\Sigma)$ or $B_n(\Sigma, P)$, is defined as the fundamental group $\pi_1(D_n(\Sigma), P)$.
It is known from Fadell--Neuwirth \cite{MR141126} and van Buskirk \cite{MR189013} that $B_n(\Sigma)$ is torsion-free if and only if $\Sigma \notin \{\mathbb{S}^2, \mathbb{R}P^2\}$.

The natural diagonal action of $\Homeo^+(\Sigma)$ on $D_n(\Sigma)$ induces a map
\[
\Psi: \Homeo^+(\Sigma) \to D_n(\Sigma), \quad f \mapsto f(P),
\]
which is a fiber bundle with fiber $\Homeo^+(\Sigma, P)$, the subgroup fixing $P$ setwise.
This gives rise to the following long exact sequence:
\begin{center}
\begin{tikzcd}
& \ar[d, phantom, ""{coordinate, name = PH1}] & \cdots \ar[r] & \pi_2(D_n(\Sigma), P) \ar[dll, to path = {-- ([xshift=2ex]\tikztostart.east) |- (PH1) [near end]\tikztonodes -| ([xshift=-2ex]\tikztotarget.west) -- (\tikztotarget)}] \\
 & \pi_1(\Homeo^+(\Sigma, P), id) \ar[r] \ar[d, phantom, ""{coordinate, name = PH}] & \pi_1(\Homeo^+(\Sigma), id) \ar[r] & \pi_1(D_n(\Sigma), P) \ar[dll, to path = {-- ([xshift=2ex]\tikztostart.east) |- (PH) [near end]\tikztonodes -| ([xshift=-2ex]\tikztotarget.west) -- (\tikztotarget)}] \\
& \pi_0(\Homeo^+(\Sigma, P), id) \ar[r] & \pi_0(\Homeo^+(\Sigma), id) \ar[r] & 1
\end{tikzcd}
\end{center}

On the other hand, Hamstrom \cite{MR202140} proved that $\pi_1(\Homeo^+(\Sigma), id)$ is trivial whenever $\Sigma \notin \{\mathbb{S}^2, T^2, \mathbb{R}P^2, K\}$.
This yields the short exact sequence
\[
1 \to B_n(\Sigma, P) \to \Mod(\Sigma, P) \to \Mod(\Sigma) \to 1.
\]
For $n = 1$, this coincides with the classical Birman exact sequence.

We summarize these facts in the following lemma.

\begin{lemma} \label{lem:tech}
For all $n \geq 1$, the following hold:
\begin{enumerate}
\item \label{num:tech1} $B_n(\Sigma)$ is torsion-free if and only if $\Sigma \notin \{ \mathbb{S}^2, \mathbb{R}P^2 \}$.
\item \label{num:tech2} If $\Sigma \notin \{\mathbb{S}^2, T^2, \mathbb{R}P^2, K\}$, then the following exact sequence holds:
\[
1 \to B_n(\Sigma, P) \to \Mod(\Sigma, P) \to \Mod(\Sigma) \to 1.
\]
\end{enumerate}
\end{lemma}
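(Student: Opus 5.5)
Part \eqref{num:tech1} is a citation, and part \eqref{num:tech2} follows from the long exact sequence of the evaluation fibration $\Psi$ once two inputs are in place: the vanishing of $\pi_1(\Homeo^+(\Sigma), id)$ (Hamstrom) and the identification of the $\pi_0$ terms with mapping class groups.

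For \eqref{num:tech1}, I will simply invoke Fadell--Neuwirth and van Buskirk, as recalled above. The argument behind it, which I will sketch for completeness, runs as follows. For $\Sigma \notin \{\mathbb{S}^2, \mathbb{R}P^2\}$, the Fadell--Neuwirth fibrations
\[
\tilde{D}_n(\Sigma) \to \tilde{D}_{n-1}(\Sigma)
\]
have aspherical fibres, namely $\Sigma$ with $n-1$ punctures. By induction $\tilde{D}_n(\Sigma)$ is a finite-dimensional $K(\pi,1)$, and so is its finite quotient $D_n(\Sigma)$. The fundamental group of a finite-dimensional aspherical space is torsion-free, since a finite cyclic subgroup would have infinite cohomological dimension. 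For $\mathbb{S}^2$ and $\mathbb{R}P^2$, the torsion examples are classical: the full twist in $B_n(\mathbb{S}^2)$ has order $2$, and analogous elements exist for $\mathbb{R}P^2$.

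For \eqref{num:tech2}, I first justify that $\Psi$ is a fibre bundle. The map is $f \mapsto f(P)$, and the needed local sections come from the standard local-triviality argument: near a configuration, points can be moved by homeomorphisms supported in small disjoint discs, depending continuously on the target points. Since $\Sigma$ is connected, $\Homeo^+(\Sigma)$ acts transitively on $D_n(\Sigma)$, so $D_n(\Sigma)$ is connected and the sequence ends with $\pi_0(\Homeo^+(\Sigma)) \to 1$.

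Next I read off the tail of the long exact sequence:
\[
\pi_1(\Homeo^+(\Sigma), id) \to B_n(\Sigma, P) \to \pi_0\Homeo^+(\Sigma,P) \to \pi_0\Homeo^+(\Sigma) \to 1.
\]
By the definitions of mapping class groups, the two $\pi_0$ terms are $\Mod(\Sigma, P)$ and $\Mod(\Sigma)$. Since $\Psi$ is a fibration, these $\pi_0$ terms are groups and the boundary map $\partial$ is a homomorphism; exactness then holds in the group sense.

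Finally, Hamstrom's theorem says the leftmost term vanishes when $\Sigma \notin \{\mathbb{S}^2, T^2, \mathbb{R}P^2, K\}$. Hence $\partial$ is injective, and the claimed short exact sequence follows.

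The main obstacle is the bundle property of $\Psi$ and the care needed with boundary and orientation conventions. $\Homeo(\Sigma,\partial\Sigma)$ fixes the boundary pointwise, so $P$ must be taken in the interior and $D_n$ of the interior used. With that convention all steps go through.
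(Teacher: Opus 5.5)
Your proposal is correct and follows the paper's route: part (1) is the Fadell--Neuwirth/van Buskirk citation, and part (2) is the tail of the long exact sequence of the evaluation fibration $\Psi$, with $\pi_1(\Homeo^+(\Sigma),id)$ removed by Hamstrom's theorem; your remark that $P$ must lie in the interior is a sensible refinement the paper leaves implicit. One small caveat, which comes from the statement itself: the full twist has order $2$ in $B_n(\mathbb{S}^2)$ only for $n\ge 3$ (for $n=2$ the torsion element is $\sigma_1$, since $B_2(\mathbb{S}^2)\cong\mathbb{Z}/2$), and $B_1(\mathbb{S}^2)=1$ is torsion-free, so the ``only if'' in (1) actually fails at $n=1$. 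This is harmless, because only the ``if'' direction is used later.
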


\begin{proof}[Proof of Proposition \ref{prop:torsion_free}]
Suppose there exists a non-identity torsion element $f \in \Homeo_0(\Sigma)$.
Choose a point $x \in \Sigma$ in the support of $f$, and let $P$ be the $\langle f \rangle$-orbit of $x$.
Then $P$ contains finitely many points, with at least two elements.
Since $f$ cyclically permutes the points of $P$, it represents a nontrivial element of $\Mod(\Sigma, P)$.

However, $f$ represents the identity element in $\Mod(\Sigma)$.
By Lemma \ref{lem:tech}\eqref{num:tech2}, $f$ therefore corresponds to a finite-order element in $B_n(\Sigma, P)$.
This contradicts Lemma \ref{lem:tech}\eqref{num:tech1}, which asserts that $B_n(\Sigma, P)$ is torsion-free.
Hence, no such $f$ exists, and $\Homeo_0(\Sigma)$ is torsion-free.
\end{proof}

An element of a group is called \emph{periodic} if it has finite order, and a group is called \emph{periodic} if all of its elements are periodic.
Burnside asked whether every finitely generated periodic group is finite.
Many counterexamples have since been constructed by Golod--Shafarevich \cite{MR161852}, Adian--Novikov \cite{MR240178, MR240179, MR240180}, and others.
Nevertheless, one can often reduce Burnside-type questions to subgroups of specific groups.
For instance, Hurtado--Kocsard--Rodriguez-Hertz \cite{MR4173155} proved that every finitely generated periodic subgroup of smooth area-preserving homeomorphisms of $\SSS^2$ is finite, and Guelman--Liousse \cite{MR3158049} showed Burnside property holds for measure-preserving homeomorphisms on $T^2$.

By Tits alternative for mapping class groups of orientable surfaces \cite{MR800253}, every periodic subgroup is virtually solvable.
In the non-orientable case (excluding $\mathbb{R}P^2$ and $K$), the orientable double cover induces an embedding of the mapping class group into that of an orientable surface \cite{MR300288, MR2675928, MR4729813}, implying that Tits alternative still holds.
Since the mapping class groups of $\mathbb{R}P^2$ and $K$ are finite, it follows that all mapping class groups of surfaces satisfy Tits alternative.

\begin{lemma} \label{lem:tits}
Let $G$ be a group satisfying the Tits alternative, i.e., every finitely generated subgroup of $G$ is either virtually solvable or contains a free subgroup of rank $2$. 
Then every finitely generated periodic subgroup of $G$ is finite.
\end{lemma}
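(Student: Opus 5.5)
Let $H \le G$ be a finitely generated periodic subgroup. By the Tits alternative, $H$ is either virtually solvable or contains a free subgroup of rank $2$. The second option is impossible: a nontrivial free group has elements of infinite order, while $H$ is periodic. Hence $H$ is virtually solvable. Since $H$ is finitely generated, it has a finite-index subgroup $S$ that is solvable. A finite-index subgroup of a finitely generated group is finitely generated (Schreier's lemma), and $S$ is periodic as a subgroup of $H$. So it suffices to show that every finitely generated periodic solvable group is finite.

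I would prove this by induction on the derived length $d$ of $S$. If $d \le 1$, then $S$ is abelian, finitely generated and periodic. It is therefore a quotient of $\mathbb{Z}^k$ in which every generator has finite order, hence finite.

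For $d > 1$, consider the abelianization $S/[S,S]$. It is finitely generated, abelian and periodic, so it is finite by the base case. Then $[S,S]$ has finite index in $S$, and Schreier's lemma again shows that $[S,S]$ is finitely generated. It is periodic, and it is solvable of derived length $d-1$. By induction $[S,S]$ is finite, so $S$ is finite, and therefore $H$ is finite, since $|H| \le [H:S]\,|S|$.

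The only delicate point is keeping finite generation at each step of the induction. This is exactly why I would pass to the finite-index subgroup $[S,S]$, which Schreier's lemma keeps finitely generated, rather than work with an arbitrary term of the derived series. All other steps are routine.
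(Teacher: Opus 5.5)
Your proposal is correct and follows essentially the same route as the paper. You rule out $F_2$ by periodicity, pass to a finite-index solvable subgroup via Schreier's lemma, and induct on derived length using finiteness of the abelianization. The only cosmetic difference is in the base case, where you bound the abelian group by the orders of its generators instead of invoking the structure theorem for finitely generated abelian groups.
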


\begin{proof}
Let $K \leq G$ be a finitely generated periodic subgroup.
By the Tits alternative, $K$ is either virtually solvable or contains a free subgroup $F \cong F_2$.
The latter is impossible, since $F_2$ contains elements of infinite order, contradicting $K$ being periodic.
Hence $K$ is virtually solvable, so there exists a solvable subgroup $S \leq K$ of finite index.

It remains to show that $S$ is finite.
Since $[K:S] < \infty$ and $K$ is finitely generated, the Schreier transversal lemma implies that $S$ is finitely generated.
As a subgroup of $K$, the group $S$ is also periodic.
We show by induction on the derived length $d$ of $S$ that every finitely generated periodic solvable group is finite.

If $d = 1$, then $S$ is abelian, and by the classification of finitely generated abelian groups, $S \cong \mathbb{Z}^r \oplus T$ for some $r \geq 0$ and finite group $T$.
Since $S$ is periodic, we have $r = 0$, so $S = T$ is finite.

If $d \geq 2$, the abelianization $S/S'$ is a finitely generated periodic abelian group, hence finite by the case $d=1$.
Since $[S:S'] < \infty$ and $S$ is finitely generated, the Schreier transversal lemma implies that $S'$ is finitely generated.
Since $S'$ is a subgroup of $S$, it is periodic, and it is solvable of derived length $d-1$.
By the induction hypothesis, $S'$ is finite, and therefore $|S| = [S:S'] \cdot |S'| < \infty$.

Hence $S$ is finite, and consequently $|K| = [K:S] \cdot |S| < \infty$.
\end{proof}

Consequently, every finitely generated periodic subgroup of $\mathrm{Mod}(\Sigma)$ is finite.
The same conclusion holds for the extended mapping class group $\mathrm{Mod}^{\pm}(\Sigma)$.
Indeed, let $H \leq \mathrm{Mod}^{\pm}(\Sigma)$ be a finitely generated periodic subgroup.
Since $\mathrm{Mod}(\Sigma)$ has index $2$ in $\mathrm{Mod}^{\pm}(\Sigma)$, the intersection $H_0 := H \cap \mathrm{Mod}(\Sigma)$ has index at most $2$ in $H$, hence is finite index.
Since $H$ is finitely generated and $[H : H_0] < \infty$, the Schreier transversal lemma implies that $H_0$ is 
finitely generated.
As a subgroup of $H$, the group $H_0$ is also periodic.
Since $H_0 \leq \mathrm{Mod}(\Sigma)$, Lemma \ref{lem:tits} implies that $H_0$ is finite. 
Therefore
\[
|H| = [H : H_0] \cdot |H_0| < \infty,
\]
so $H$ is finite.

\begin{lemma} \label{lem:group}
Let \[1 \to A \rightarrow B \xrightarrow{\pi} C \to 1\] be a short exact sequence of groups.
If $A$ and $C$ both satisfy the Burnside property (that is, every finitely generated periodic subgroup is finite), then so does $B$.
\end{lemma}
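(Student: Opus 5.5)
Let $H \leq B$ be a finitely generated periodic subgroup; we show $H$ is finite. The plan is to push $H$ forward to $C$, then control the kernel inside $A$.

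First, $\pi(H) \leq C$ is finitely generated, being the image of a finitely generated group, and periodic, being a homomorphic image of a periodic group. The Burnside property of $C$ then makes $\pi(H)$ finite.

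Next, set $H_A := H \cap A = \ker(\pi|_H)$. Then $H/H_A \cong \pi(H)$ is finite, so $[H : H_A] < \infty$. Since $H$ is finitely generated, the Schreier transversal lemma (used in the same way as in the proof of Lemma~\ref{lem:tits}) shows that $H_A$ is finitely generated. As a subgroup of $H$, it is also periodic. Moreover $H_A$ is a subgroup of $A$ via the inclusion $A \to B$, so the Burnside property of $A$ makes $H_A$ finite.

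Finally,
\[
|H| = [H : H_A] \cdot |H_A| < \infty ,
\]
as required. The only step needing any care is the finite generation of $H_A$, which is exactly where Schreier's lemma is used; finite index is essential there, since arbitrary subgroups of finitely generated groups need not be finitely generated.
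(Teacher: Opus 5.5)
Your proposal is correct and follows the paper's proof essentially step for step: you show $\pi(H)$ is finite, use Schreier's lemma to see that the finite-index subgroup $H \cap A$ is finitely generated, apply the Burnside property of $A$, and conclude $|H| = [H : H \cap A]\cdot |H \cap A| < \infty$. Your explicit check that $\pi(H)$ is finitely generated and periodic is a detail the paper leaves implicit.
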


\begin{proof}
Let $H \leq B$ be a finitely generated periodic subgroup.
Since $\pi(H)$ is a finitely generated periodic subgroup of $C$ and $C$ satisfies the Burnside property, $\pi(H)$ is finite.
Set $K := H \cap \ker \pi = H \cap A$, so that $K$ is a normal subgroup of $H$ with $H/K \cong \pi(H)$.
In particular, $[H : K] = |\pi(H)| < \infty$.

Since $K$ is a subgroup of finite index in the finitely generated group $H$, it is itself finitely generated by the Schreier transversal lemma.
Moreover, every element of $K \leq \ker\pi \cong A$ has finite order since $H$ is periodic.
Thus $K$ is a finitely generated periodic subgroup of $A$, and since $A$ satisfies the Burnside property, $K$ is finite.

Since $H$ is an extension of the finite group $\pi(H) \cong H/K$ by the finite group $K$, we conclude that $H$ is finite.
\end{proof}

As a corollary of Lemma~\ref{lem:group}, the Burnside property is preserved under group extensions with torsion-free kernel.
Since $\Homeo_0(\Sigma)$ is torsion-free for $\Sigma \notin \{\mathbb{S}^2, T^2, \mathbb{R}P^2, K\}$ and $\Mod^\pm(\Sigma)$ satisfies the Burnside property, the short exact sequence
\[
    1 \to \Homeo_0(\Sigma) \to \Homeo(\Sigma) \to \Mod^\pm(\Sigma) \to 1
\]
immediately implies that $\Homeo(\Sigma)$ satisfies the Burnside property as well, yielding the following theorem.

\begin{theorem} \label{thm:burnside}
If $\Sigma \notin \{ \mathbb{S}^2, T^2, \mathbb{R}P^2, K \}$, then every finitely generated periodic subgroup of $\Homeo(\Sigma)$ is finite.
\end{theorem}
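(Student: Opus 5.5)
The proof will apply Lemma~\ref{lem:group} to the short exact sequence
\[
    1 \to \Homeo_0(\Sigma) \to \Homeo(\Sigma) \to \Mod^\pm(\Sigma) \to 1,
\]
where the quotient map sends a homeomorphism to its isotopy class. The right-hand term is $\Mod^\pm(\Sigma)$ in the orientable case and $\Mod(\Sigma)$ in the non-orientable case. So two things need checking: the kernel has the Burnside property, and the quotient has it too.

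\textbf{Kernel.} By Proposition~\ref{prop:torsion_free}, the implication \eqref{enum:1}$\Rightarrow$\eqref{enum:2} shows that $\Homeo_0(\Sigma)$ is torsion-free when $\Sigma \notin \{\mathbb{S}^2, T^2, \mathbb{R}P^2, K\}$. A torsion-free group trivially has the Burnside property: a periodic subgroup of it has every element of finite order, hence every element trivial, so the subgroup is trivial.

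\textbf{Quotient.} For $\Mod(\Sigma)$, I will combine the Tits alternative for mapping class groups with Lemma~\ref{lem:tits}. In the orientable case this is Ivanov--McCarthy. In the non-orientable case I will use the embedding induced by the orientable double cover. For $\Mod^\pm(\Sigma)$ in the orientable case, I will pass to the index-two subgroup using the argument given just before Lemma~\ref{lem:group}.

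\textbf{Main obstacle.} Lemma~\ref{lem:group} applied to these two facts gives the theorem, so the point needing most care is that the sequence really is exact. Surjectivity onto $\Mod^\pm(\Sigma)$ holds by definition. The kernel is precisely the homeomorphisms isotopic to the identity, and that is $\Homeo_0(\Sigma)$ provided path components agree with isotopy classes. This holds because $\Homeo(\Sigma)$ is locally path-connected for compact surfaces, by local contractibility of homeomorphism groups (\v{C}ernavskii, Edwards--Kirby). With the boundary fixed, one must keep the same conventions as in the definition of $\Mod$.

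One further subtlety concerns the surfaces with boundary. In that case $\Homeo(\Sigma,\partial\Sigma)$ contains no orientation-reversing elements, so the quotient is just $\Mod(\Sigma)$ and the orientation-reversal step can be dropped. The Tits alternative still applies in this setting, since the capping homomorphism has a free abelian kernel generated by boundary Dehn twists. If needed, I will instead apply Lemma~\ref{lem:group} a second time to the capping sequence, whose kernel is torsion-free.
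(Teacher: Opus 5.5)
Your proposal is correct and follows the paper's proof: you apply Lemma~\ref{lem:group} to $1\to\Homeo_0(\Sigma)\to\Homeo(\Sigma)\to\Mod^\pm(\Sigma)\to 1$, get torsion-freeness of the kernel from Proposition~\ref{prop:torsion_free}, and get the Burnside property of $\Mod^\pm(\Sigma)$ from the Tits alternative, Lemma~\ref{lem:tits}, and the index-two argument. Your extra justification goes beyond what the paper writes but uses the same route. It covers exactness (path components versus isotopy classes via local contractibility) and the boundary case (no orientation-reversing elements; capping with punctured disks).
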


Note that Guelman--Liousse \cite{MR3712338} established the Burnside property for groups of homeomorphisms of compact orientable surfaces. More precisely, they proved that every finitely generated periodic subgroup of the homeomorphism group of an orientable compact surface, except for the sphere and the torus, is finite. Their approach relies on a combination of dynamical and geometric arguments, including the analysis of group actions with finite orbits and rigidity phenomena for periodic homeomorphisms. Our result recovers this conclusion through a different argument, based on the torsion-freeness of the identity component and a general extension principle.

The four exceptional surfaces (the sphere, the torus, the projective plane, and the Klein bottle) lie outside the scope of Theorem \ref{thm:burnside}, and the Burnside problem for their full homeomorphism groups remains incompletely resolved.
For the sphere, Hurtado--Kocsard--Rodr\'iguez-Hertz \cite{MR4173155} proved the Burnside property for the group of area-preserving analytic diffeomorphisms of $\mathbb{S}^2$, but the purely topological case of $\Homeo(\mathbb{S}^2)$ is open.
Partial progress has been made: Conejeros \cite{MR3892240} proved that every finitely generated group of homeomorphisms of $\mathbb{S}^2$ whose elements all have finite order that is a power of $2$, with a uniform bound on the orders, is finite, and obtained a parallel result for area-preserving homeomorphisms.
For the torus, Guelman--Liousse \cite{MR3158049} proved that every finitely generated periodic group of homeomorphisms of $T^2$ preserving a probability measure is finite, but the measure-free case is open.
For the real projective plane and the Klein bottle, no analogue of these results appears in the literature, and the Burnside problem for $\Homeo(\mathbb{R}P^2)$ and $\Homeo(K)$ appears to be entirely open.
The difficulty in all four cases is precisely that $\Homeo_0(\Sigma)$ is not torsion-free, so the extension argument of Lemma \ref{lem:group} does not apply, and new dynamical or algebraic methods seem to be required.

\begin{conjecture}
    For each $\Sigma \in \{\mathbb{S}^2, T^2, \mathbb{R}P^2, K\}$, every finitely generated periodic subgroup of $\Homeo(\Sigma)$ is finite.
\end{conjecture}

\section*{Burnside problem for circle homeomorphisms}

Let us focus on the circle $\mathbb{S}^1$ and the compact interval $I = [0,1]$.
The group $\Homeo(I) = \Homeo^+(I)$ is torsion-free (see \cite{MR3560661}), so every finitely generated periodic subgroup is trivial.
The group $\Homeo^+(\mathbb{S}^1)$, by contrast, contains torsion: any rotation of finite order provides an example.
Nevertheless, finitely generated periodic subgroups remain highly constrained.

\begin{theorem} \label{thm:circle}
Every finitely generated periodic subgroup of $\Homeo(\mathbb{S}^1)$ is finite and cyclic.
\end{theorem}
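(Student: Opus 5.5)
The plan is to show that a periodic subgroup of orientation-preserving circle homeomorphisms acts freely, and then apply H\"older's theorem to a lifted action on $\mathbb{R}$. This reduces everything to a subgroup of $\mathbb{R}/\mathbb{Z}$. I will first treat a finitely generated periodic subgroup $H \leq \Homeo^+(\mathbb{S}^1)$, and handle orientation-reversing elements at the end. As stated, the theorem must be read for $\Homeo^+(\mathbb{S}^1)$, because the full group contains finite dihedral groups.

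\emph{Step 1: nontrivial elements of $H$ are fixed-point free.} Let $f \in H$ have finite order and a fixed point $x$. Cutting $\mathbb{S}^1$ at $x$ turns $f$ into a finite-order element of $\Homeo^+(I)$. That group is torsion-free, as recalled above, so $f = id$. Hence $H$ acts freely on $\mathbb{S}^1$.

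\emph{Step 2: lift to $\mathbb{R}$.} Let $\hat H \leq \Homeo^+(\mathbb{R})$ be the group of all lifts of elements of $H$ under $\mathbb{R} \to \mathbb{R}/\mathbb{Z}$. It is a central extension
\[
1 \to \mathbb{Z} \to \hat H \to H \to 1,
\]
where $\mathbb{Z}$ acts by integer translations. I claim $\hat H$ acts freely on $\mathbb{R}$. Suppose a lift $\hat f$ fixes some $t \in \mathbb{R}$. Then $f$ fixes the image of $t$, so $f = id$ by Step 1. Thus $\hat f$ is an integer translation, and since it has a fixed point it is the identity.

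\emph{Step 3: apply H\"older and conclude.} By H\"older's theorem, a group acting freely by homeomorphisms on $\mathbb{R}$ is abelian and embeds, order-preservingly, into $(\mathbb{R},+)$. This is the step I expect to need the most care. The standard argument orders $\hat H$ by $\hat f > \hat g$ iff $\hat f(0) > \hat g(0)$, checks that the order is bi-invariant and Archimedean, and embeds into $\mathbb{R}$. Under this embedding the translation $t \mapsto t+1$ goes to a positive real, which we normalize to $1$. So $H = \hat H/\mathbb{Z}$ embeds into $\mathbb{R}/\mathbb{Z}$; equivalently, the rotation number is an injective homomorphism on $H$. A finitely generated periodic subgroup of $\mathbb{R}/\mathbb{Z}$ lies in $\mathbb{Q}/\mathbb{Z}$ and is finite by the abelian case of Lemma~\ref{lem:tits}. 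Every finite subgroup of $\mathbb{Q}/\mathbb{Z}$ is cyclic. As a by-product, $H$ is topologically conjugate to a group of rotations, since a finite cyclic group acting freely has a circle quotient.

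\emph{Orientation-reversing elements.} For $H \leq \Homeo(\mathbb{S}^1)$ in general, set $H_0 = H \cap \Homeo^+(\mathbb{S}^1)$, which has index at most $2$. By Schreier, $H_0$ is finitely generated, so it is finite cyclic by the above, and therefore $H$ is finite. However, $H$ need not be cyclic. A reflection together with a rotation of order $n \geq 3$ generates a dihedral group of order $2n$. So in this case the conclusion is ``finite, and either cyclic or dihedral''. The latter follows because $H_0$ is conjugate to a rotation group and an orientation-reversing involution normalizing it acts by inversion. I will flag that the word ``cyclic'' in the theorem requires restricting to $\Homeo^+(\mathbb{S}^1)$.
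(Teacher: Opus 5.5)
Your argument is correct and follows the paper's route. Both show that nontrivial elements have no fixed points, embed the resulting free action into rotations via H\"older, and then deduce finiteness and cyclicity. The differences are minor: you prove the circle H\"older statement by lifting to $\mathbb{R}$ instead of citing Navas, and you get cyclicity from finite subgroups of $\mathbb{Q}/\mathbb{Z}$ instead of the orbit-shift homomorphism.

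Your caveat is also right and worth keeping. The paper's reduction via Lemma~\ref{lem:group} transfers only finiteness, so its proof gives cyclicity only in $\Homeo^+(\mathbb{S}^1)$. The Klein four-group generated by $z \mapsto \bar z$ and $z \mapsto -\bar z$ already shows the literal statement fails in $\Homeo(\mathbb{S}^1)$. For your dihedral remark, add that every orientation-reversing element of $H$ is an involution. Such an element has exactly two fixed points, so its square lies in $H_0$, has a fixed point, and is therefore trivial by Step~1.
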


\begin{proof}
By Lemma \ref{lem:group}, we need need only to check that every finitely generated periodic subgroup of $\Homeo^+(\mathbb{S}^1)$ is finite.
Let $G = \langle f_1, \ldots, f_k \rangle \leq
\Homeo^+(\mathbb{S}^1)$ be finitely generated and periodic, with $f_i^{n_i} = id$ for each $i$.

We first show that every non-identity element of $G$ has no fixed point.
Let $g \in G$ with $g \neq id$. Since $G$ is periodic, $g^m = id$ for some $m \geq 1$.
Suppose for contradiction that $g(x_0) = x_0$ for some $x_0 \in \mathbb{S}^1$.
The restriction of $g$ to the open arc $\mathbb{S}^1 \setminus \{x_0\} \cong \mathbb{R}$ is then an orientation-preserving homeomorphism of $\mathbb{R}$.
Such a homeomorphism is either the identity or fixed-point-free; in the latter case it satisfies $h(x) > x$ for all $x \in \mathbb{R}$ or $h(x) < x$ for all $x \in \mathbb{R}$ (since $h - id$ is continuous and nowhere zero, hence of constant sign).
In either sub-case, every orbit under $h$ is strictly monotone, so no orbit is finite, contradicting $g^m = id$.
Hence $g$ must restrict to the identity on $\mathbb{S}^1 \setminus \{x_0\}$, giving $g = id$, a contradiction.
Therefore $g$ has no fixed point on $\mathbb{S}^1$.

Since every non-identity element of $G$ has no fixed point, $G$ acts
freely on $\mathbb{S}^1$. By \cite[Theorem 2.2.32]{MR2809110}, $G$ is isomorphic to a subgroup of $\mathrm{SO}(2)$.
Because $\mathrm{SO}(2)$ is abelian and $G$ is periodic, $G$ is a finite abelian group.

It remains to show $G$ is cyclic. By \cite[Lemma~3.1]{MR3813208}, every finite subgroup of $\Homeo^+(\mathbb{S}^1)$ is cyclic.
The proof is as follows. Since every non-identity element of the finite group $G$ has no fixed point, $G$ acts freely on $\mathbb{S}^1$.
Fix any $x_0 \in \mathbb{S}^1$; since the stabilizer of $x_0$ in $G$ is trivial, the orbit $\mathcal{O} = G \cdot x_0$ has exactly $|G|$ points.
Label them $x_0, x_1, \ldots, x_{|G|-1}$ in their cyclic order inherited from $\mathbb{S}^1$.
Every element of $G$ sends $\mathcal{O}$ to itself by an orientation-preserving bijection, hence acts on $\mathcal{O}$ as a cyclic permutation $x_i \mapsto x_{i+\varphi(g) \pmod{|G|}}$ for a unique $\varphi(g) \in \mathbb{Z}/|G|\mathbb{Z}$.
The map $\varphi \colon G \to \mathbb{Z}/|G|\mathbb{Z}$ is a group homomorphism.
It is injective: if $\varphi(g) = 0$ then $g$ fixes every point of $\mathcal{O}$, in particular $g(x_0) = x_0$, so $g = id$ by the free action.
Since $|G| = |\mathbb{Z}/|G|\mathbb{Z}|$, $\varphi$ is an isomorphism, giving $G \cong \mathbb{Z}/|G|\mathbb{Z}$.
\end{proof}

\section*{Remarks and questions}

By a theorem of Newman \cite{N31} (see also Smith \cite{MR4128}, Dress \cite{MR238353}, and Bredon \cite{MR413144}), any periodic homeomorphism of a (finite-dimensional) compact connected manifold that fixes an open set pointwise must be the identity.
Note $\Homeo(M)$ is the group of \emph{pointwise boundary-fixing} homeomorphisms of $M$.
By Newman's theorem, we have the following.

\begin{proposition}
    If $M$ is a finite-dimensional compact connected manifold with boundary, then $\Homeo(M)$ is torsion-free.
\end{proposition}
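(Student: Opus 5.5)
The plan is to apply Newman's theorem directly. The only work is to produce, for a given nontrivial periodic element, a nonempty open subset of $M$ that it fixes pointwise. The boundary supplies this via a collar.

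Let $f \in \Homeo(M)$ have finite order $m$. We must show $f = id$, assuming, as the statement intends, that $\partial M \neq \emptyset$. By definition $f$ fixes $\partial M$ pointwise. The first step is to pass to a manifold without boundary that contains an open set fixed by $f$. We will use the double
\[
DM = M \cup_{\partial M} M
\]
and extend $f$ by $f$ on both copies. Since $f|_{\partial M} = id$, the two pieces agree on the common boundary, so we obtain a well-defined homeomorphism $Df$ of $DM$ with $(Df)^m = id$. The double $DM$ is a compact connected manifold without boundary of the same finite dimension; it is connected because $M$ is connected and $\partial M \neq \emptyset$.

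Next we need an open set fixed pointwise, and this is the step we expect to be the main obstacle. Fixing $\partial M$ pointwise only fixes a codimension-one subset of $DM$, not an open set. There are two ways to handle this.

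\begin{enumerate}
\item Use the version of Newman's theorem that only requires the fixed set to contain a nonempty open set of a manifold with boundary. Concretely, in a collar chart near a boundary point, $f$ fixes an open subset of $\partial M$. The cleanest route is instead to glue $M$ to a second copy of $M$ along $\partial M$ and extend by the \emph{identity} on the second copy. This defines a homeomorphism $F$ of $DM$: it is continuous by the pasting lemma, since both pieces are closed and $f$ agrees with $id$ on $\partial M$. Moreover $F^m = id$, and $F$ fixes pointwise the interior of the second copy, which is a nonempty open subset of $DM$.
\item Newman's theorem, as quoted before the statement, then gives $F = id$, hence $f = id$.
\end{enumerate}

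The construction in step (1) is exactly what resolves the obstacle: extending by the identity rather than by $f$ is what produces a pointwise-fixed open set. The only thing left to check is that $DM$ is a genuine topological manifold. This follows from the existence of a collar of $\partial M$ (Brown's collaring theorem), which makes the glued space locally Euclidean near $\partial M$.

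If $\partial M = \emptyset$ the statement is false, as the circle shows, so the hypothesis $\partial M \neq \emptyset$ is used essentially. Since every periodic element is therefore trivial, $\Homeo(M)$ is torsion-free.
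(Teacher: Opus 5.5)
Your proof is correct and follows essentially the same approach as the paper: extend $f$ by the identity on a piece glued along $\partial M$, which produces a pointwise-fixed open set, and then apply Newman's theorem. The only difference is that you glue on a second copy of $M$ to get the closed double $DM$, whereas the paper glues on an external collar $\partial M \times [0,1]$ to get $M' \cong M$ with fixed open set $\partial M \times (0,1)$; your version has the small advantage that Newman's theorem is applied to a manifold without boundary.
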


\begin{proof}
    Let $f \in \Homeo(M)$ be a periodic homeomorphism.
    Define a collar $N = \partial M \times [0,1]$, and let $\partial_0 N = \partial M \times \{0\}$.
    Let us glue $M$ and $N$ along $\partial M$ and $\partial_0N$:
    \[
        M' = M \cup_{\partial M \,=\, \partial_0 N} N,
    \]
    which is homeomorphic to $M$.
    Extend $f$ to $f_{M'}$ by setting
    \[
        f_{M'} = \begin{cases} f & \text{on } M, \\ id_N & \text{on } N. \end{cases}
    \]
    Then $f_{M'}$ is periodic and fixes the open set $\partial M \times (0, 1) \subset N$ pointwise.
    By Newman's theorem, $f_{M'} = id_{M'}$, and hence $f = f_{M'}|_M = id_M$.
\end{proof}

So the Burnside problem for $\Homeo(M)$ is interesting when $M$ is closed (i.e. without boundary).
The most general conjecture along the way is the following.

\begin{conjecture}
    The homeomorphism group of a finite-dimensional closed connected manifold satisfies the Burnside property, i.e., every finitely generated periodic subgroup is finite.
\end{conjecture}

Before closing this note, we raise two questions.
The first concerns the Burnside problem for closed hyperbolic $3$-manifolds.
If $M$ is a closed hyperbolic $3$-manifold, then Mostow rigidity \cite{MR236383} implies that $\operatorname{Mod}(M)$ is finite.
By Lemma \ref{lem:group}, it therefore suffices to ask whether $\operatorname{Homeo}_0(M)$ satisfies the Burnside property, as a positive answer would imply that the full group $\operatorname{Homeo}(M)$ does as well.

The second concerns doubled handlebodies.
A doubled handlebody is the closed $3$-manifold obtained by doubling a handlebody along its boundary.
It is well known, by work of Whitehead \cite{MR1575455}, Laudenbach \cite{MR356056}, and Brendle--Broaddus--Putman \cite{MR4557874}, that its mapping class group is virtually isomorphic to $\operatorname{Out}(F_n)$; consequently, the Tits alternative holds for this group by a theorem of Bestvina--Feighn--Handel \cite{MR1765705, MR2150382}.
This motivates the following question: does the identity component $\operatorname{Homeo}_0(M)$ of the homeomorphism group of a doubled handlebody satisfy the Burnside property?

\section*{Acknowledgments}

The author is grateful to Sang-hyun Kim for reading an early draft of this note
and for helpful discussions. The author also thanks Nancy Guelman for reading
a preliminary version and for her valuable comments and suggestions.

\bibliographystyle{amsalpha}
\bibliography{references}

\end{document}